\numberwithin{equation}{section}
\newtheorem{theorem}{Theorem}[section]
\newtheorem{lemma}[theorem]{Lemma}
\newtheorem{proposition}[theorem]{Proposition}
\theoremstyle{definition}
\newtheorem{example}[theorem]{Example}
\newtheorem{remark}[theorem]{Remark}
\begin{document}

\parskip 4pt
\baselineskip 16pt


\title[On some generalizations of the sum of powers of natural numbers]
{On some generalizations of the sum of powers of natural numbers}

\author[Andrei K. Svinin]{Andrei K. Svinin}
\address{Andrei K. Svinin, 
Matrosov Institute for System Dynamics and Control Theory of 
Siberian Branch of Russian Academy of Sciences,
P.O. Box 292, 664033 Irkutsk, Russia}
\email{svinin@icc.ru}

%
%

\date{\today}



\begin{abstract}
In this paper   some generalizations of the sum of powers of natural numbers is considered. In particular,  the class of sums whose generating function is the power of the generating function for the classical sums of powers is studying. The so-called binomial sums are also considered. The problem of constructing polynomials that allow  to calculate the values of the corresponding sums in certain cases  is solved.
\end{abstract}

\maketitle

\section{Introduction}

Power-sum  
\begin{equation}
S_m(n):=\sum_{q=1}^{n}q^{m},\;\; m\geq 0
\label{power-sums}
\end{equation}
starting with the works of Blaise Pascal, Johann Faulhaber, Jacob Bernoulli for centuries, has been the object of active research. The expression (\ref{power-sums}) defines a function on the set of natural numbers. Pascal's theorem asserts that for any integer degree $m\geq 0$, the value of the sum (\ref{power-sums}) can be obtained as the value of the corresponding polynomial $\hat{S} _m(n)$.
The exponential generating  function (e.g.f.) for sums of the form (\ref{power-sums}) is given by
\begin{eqnarray}
G(n, t)&=&\sum_{j\geq 0}S_j(n)\frac{t^j}{j!}=\sum_{j\geq 0}\left(\sum_{q=1}^{n}q^j\frac{t^j}{j!}\right)=\sum_{q=1}^{n}e^{qt}\nonumber\\
&=&\frac{e^{(n+1)t}-e^t}{e^t-1}.
\label{genf}
\end{eqnarray}
In turn, the expansion (\ref{genf}) into an infinite series
\[
G(n, t)=\sum_{q\geq 0}\hat{S}_{q}(n)\frac{t^q}{q!}
\]
gives an infinite  set of polynomials $\{\hat{S}_m (n): m\geq 0\} $ whose values for natural  $n$ give the values of the corresponding sums. In 1713, Jacob Bernoulli published  an expression for the sum of the degree $m$ of the first $n$ natural numbers as a polynomial of $(m+1)$-th degree in  $n$, which is determined by some infinite set of rational numbers $\{B_m: m\geq 0\}$, which are now called the Bernoulli numbers.

We will write this expression in the following form:
\begin{equation}
\hat{S}_{m}(n)=\frac{1}{m+1}\sum_{q=0}^m(-1)^q{m+1\choose q}B_{q}n^{m+1-q}.
\label{22}
\end{equation}
Faulhaber's theorem, in turn, asserts that, for odd values of $m$, the polynomial (\ref{22}) is actually a polynomial in $w:=n(n+1)$. To a certain extent this is due to the fact that all odd Bernoulli numbers starting with the third one are zero \cite{Knut}. In turn, it is known that for any $m$, any polynomial (\ref{22}) can be written as some polynomial in $w$ multiplied by $2n+1$ \cite{Jacobi}.

In applications there can arise not only sums of the form (\ref{power-sums}), but also other sums involving the powers of natural numbers. For example, Johann Faulhaber considered the so-called $k$-fold sums $S^k_m(n)$ defined by the recurrence relation
\[
S^{k}_m(n)=\sum_{q=1}^{n}S^{k-1}_m(q),\;\; \forall k\geq 1
\]
starting from $S^{0}_m(n):=n^m$. When considering such generalizations, the question always arises about of polynomials that, like (\ref{22}), help to calculate the values of corresponding sums.

In this paper we consider some generalizations of power-sum (\ref{power-sums}) and  set the problem of finding the corresponding polynomials. In the next section, we define a some class of sums whose e.g.f. is the $k$-th power of the generating function (\ref{genf}), with $k$ being an arbitrary natural number. We denote these sums by the symbol $S^{(k)}_m(n) $ and call them power-sums  of higher order. One of our results is that we found an expression for the associated  polynomials similar to the formula (\ref{22}). These polynomials, in contrast to (\ref{22}), are determined by higher-order Bernoulli numbers. This is, although not obvious, is quite natural. In the next section we give some information about higher-order Bernoulli numbers and about the Stirling numbers of the second kind, which are also involved in the formula for these polynomials. In the second section, we also investigate some rational roots of polynomials corresponding to higher order power-sums.
In the third section, we discuss some generalizations of the sums (\ref{power-sums}) arising in applications, namely, a some class of multiple sums.

\section{ higher order power-sums}

\subsection{Definition of higher order power-sums}

Let us consider the power of the e.g.f.  (\ref{genf}):
\[
\left(G(n, t)\right)^k:=\sum_{q\geq 0}S_{q}^{(k)}(n)\frac{t^q}{q!}.
\]
We have
\begin{equation}
\left(G(n, t)\right)^k=\left(\sum_{q=1}^{n}e^{qt}\right)^k=\sum_{q=0}^{k(n-1)}{k\choose q}_ne^{(k+q)t},
\label{1}
\end{equation}
where the symbol ${k\choose q}_n$  stands for   polynomial coefficients  defined through the relation
\[
\left(\sum_{q=1}^{n}t^q\right)^k:=\sum_{q=0}^{k(n-1)}{k\choose q}_nt^{k+q}.
\]
It should be noted that the coefficients ${k\choose q}_n$, naturally generalizing the binomial coefficients ($n=2$), originated  from Abraham De Moivre and Leonhard Euler works \cite{Moivre}, \cite{Euler}.
Then they were rediscovered in the works \cite{Montel} and \cite{Tremblay} and later were studied in detail in the literature due to their good applicability. By direct computation we derive
\begin{proposition}
The expression  (\ref{1}) is the e.g.f. for sums of the form
\begin{equation}
S_{m}^{(k)}(n)=\sum_{q=0}^{k(n-1)}{k\choose q}_{n}\left(k+q\right)^m.
\label{sums} 
\end{equation}
\end{proposition}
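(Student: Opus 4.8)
The plan is to start from the closed form already established in equation (\ref{1}), namely
\[
\left(G(n,t)\right)^k = \sum_{q=0}^{k(n-1)} {k\choose q}_n e^{(k+q)t},
\]
and expand each exponential on the right-hand side into its Taylor series in $t$. Since $e^{(k+q)t}=\sum_{m\geq 0}(k+q)^m t^m/m!$, substituting this in yields a double sum in which the outer index $q$ ranges over the finite set $0\leq q\leq k(n-1)$ while the inner index $m$ ranges over all nonnegative integers.

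Because the sum over $q$ is finite, I can interchange the two summations without any convergence concern and collect the coefficient of $t^m/m!$, rewriting the expression as
\[
\left(G(n,t)\right)^k = \sum_{m\geq 0}\left(\sum_{q=0}^{k(n-1)}{k\choose q}_n (k+q)^m\right)\frac{t^m}{m!}.
\]
Comparing this with the defining expansion $\left(G(n,t)\right)^k=\sum_{q\geq 0}S_q^{(k)}(n)\,t^q/q!$ and using the uniqueness of the coefficients of a power series in $t$, I equate the coefficients of $t^m/m!$ to obtain exactly $S_m^{(k)}(n)=\sum_{q=0}^{k(n-1)}{k\choose q}_n(k+q)^m$, which is the claimed formula (\ref{sums}).

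There is essentially no genuine obstacle here: the statement is a formal consequence of the polynomial-coefficient identity (\ref{1}) together with the Taylor expansion of the exponential. The only point that warrants a moment of care is the legitimacy of swapping the order of summation, and this is immediate since the outer sum is finite. Accordingly, I would present the argument as a short direct computation rather than invoking any additional machinery.
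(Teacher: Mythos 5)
Your proof is correct and is precisely the ``direct computation'' that the paper invokes without spelling out: expand each $e^{(k+q)t}$ in (\ref{1}) as a Taylor series, swap the finite sum over $q$ with the series in $m$, and compare coefficients of $t^m/m!$. No further comment is needed.
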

It is evident, that these sums, by definition, are the result of successive binomial convolutions, that is,
\[
S_m^{(k)}(n)=\sum_{q=0}^{m} {m\choose q}S_{q}^{(k-1)}(n)S_{m-q}(n),\;\;
k\geq 2.
\]
It is also obvious that this is true for the corresponding polynomials $\hat{S}_m^{(k)}(n)$. The following property holds:
\begin{proposition} \label{proposition2.2}
The sums  $S_m^{(k)}(n)$ satisfy the recurrence relation
\begin{eqnarray}
&&\sum_{q=0}^{m}(-1)^{q}{m+k\choose q}S(m+k-q, k)S_q^{(r)}(n) \nonumber  \\
      &&\;\;\;\;\;=\frac{1}{{k\choose r}}\sum_{j=0}^{m}(-1)^j{m+k\choose m+k-r-j}S(m+k-r-j, k-r)S(r+j, r)n^{r+j}.						
			\label{id}
\end{eqnarray}
where $S(n, k)$ are the Stirling numbers of the second kind.
\end{proposition}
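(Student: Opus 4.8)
The plan is to read both sides of (\ref{id}) as coefficients of one and the same exponential generating function, and then to evaluate that generating function in two ways. The two ingredients are the standard e.g.f. for the Stirling numbers of the second kind,
\[
\frac{(e^t-1)^k}{k!}=\sum_{p\geq k}S(p,k)\frac{t^p}{p!},
\]
and the defining relation $G(n,t)^r=\sum_{q\geq 0}S_q^{(r)}(n)\,t^q/q!$. Replacing $t$ by $-t$ in the latter produces exactly the alternating factor $(-1)^q$ that appears on the left of (\ref{id}): indeed $G(n,-t)^r=\sum_{q\geq 0}(-1)^q S_q^{(r)}(n)\,t^q/q!$. Forming the Cauchy product of these two series and collecting the coefficient of $t^{m+k}/(m+k)!$, and using that $S(p,k)$ vanishes for $p<k$ to truncate the sum at $q=m$, I would show that the left-hand side of (\ref{id}) is precisely the coefficient of $t^{m+k}/(m+k)!$ in
\[
\frac{(e^t-1)^k}{k!}\,G(n,-t)^r.
\]

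The second step is a simplification of this generating function. From $G(n,t)=e^t(e^{nt}-1)/(e^t-1)$ one computes $G(n,-t)=(1-e^{-nt})/(e^t-1)$, so that the $(e^t-1)^r$ in the denominator of $G(n,-t)^r$ cancels against part of the factor $(e^t-1)^k$, giving
\[
\frac{(e^t-1)^k}{k!}\,G(n,-t)^r=\frac{(e^t-1)^{k-r}(1-e^{-nt})^r}{k!}.
\]
This is where the hypothesis $k\geq r$, implicit in the factor $\binom{k}{r}$ on the right of (\ref{id}), is used.

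The third step is to expand the two factors of this product separately and identify the result with the right-hand side of (\ref{id}). The first factor gives $(k-r)!\sum_p S(p,k-r)\,t^p/p!$. For the second I would write $(1-e^{-nt})^r=(-1)^r(e^{-nt}-1)^r$ and apply the Stirling e.g.f. with $t$ replaced by $-nt$, obtaining $r!\sum_{p'}(-1)^{p'+r}S(p',r)n^{p'}\,t^{p'}/p'!$. Taking the Cauchy product, extracting the coefficient of $t^{m+k}/(m+k)!$, and using $(k-r)!\,r!/k!=1/\binom{k}{r}$, I would reach a single sum over an index $p'$. The substitution $p'=r+j$ then turns $(-1)^{p'+r}$ into $(-1)^j$, turns $\binom{m+k}{p'}$ into $\binom{m+k}{m+k-r-j}$, and restricts $j$ to the range $0\leq j\leq m$ by the support conditions on the two Stirling factors, reproducing exactly the right-hand side of (\ref{id}).

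Once the identity $G(n,-t)=(1-e^{-nt})/(e^t-1)$ is in hand the computation is essentially bookkeeping, so I expect the main difficulty to be a matter of care rather than of idea: keeping the three sources of signs consistent (the $(-1)^q$ from $t\mapsto -t$, the $(-1)^r$ from rewriting $1-e^{-nt}$, and the $(-1)^{p'}$ from the substitution $t\mapsto -nt$), and correctly justifying the truncation of both summations through the vanishing of the Stirling numbers outside their support. The cancellation $(e^t-1)^k/(e^t-1)^r=(e^t-1)^{k-r}$ is the one genuinely structural point, and it is precisely what forces the appearance of $S(\,\cdot\,,k-r)$ and of the factor $1/\binom{k}{r}$ on the right of (\ref{id}).
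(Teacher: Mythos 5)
Your proposal is correct and is essentially the paper's own proof: the paper likewise substitutes $t\mapsto -t$ in $G(n,t)^r$ to get $(-1)^r\bigl((e^{-nt}-1)/(e^t-1)\bigr)^r=\sum_{q\geq 0}(-1)^qS_q^{(r)}(n)\,t^q/q!$, multiplies through by $(e^t-1)^k$, and compares coefficients using $(e^t-1)^k=k!\sum_{q}S(q,k)\,t^q/q!$. You merely spell out the bookkeeping (the cancellation to $(e^t-1)^{k-r}$, the sign tracking, the support-based truncations, and the factor $r!\,(k-r)!/k!=1/\binom{k}{r}$) that the paper compresses into ``standard arguments.''
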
 
\begin{proof} 
The proposition is proved with the help of standard arguments. Let us make the change of the argument of the e.g.f. (\ref{genf}):  $t\rightarrow -t$.  Obviously, the following relation holds
\begin{equation}
(-1)^r\left(\frac{e^{-nt}-1}{e^{t}-1}\right)^{r}=\sum_{q\geq 0}(-1)^qS^{(r)}_{q}(n)\frac{t^q}{q!}.
\label{id1}
\end{equation}
Multiplying both sides of (\ref{id1}) by $(e^t-1)^k$ and taking into account that
\[
(e^t-1)^k=k!\left(\sum_{q\geq 0} S(q, k)\frac{t^q}{q!}\right),\;\;
\forall k\geq 0,
\]
we obtain, as a result, the relation (\ref{id}).
\end{proof} 
As a special case of (\ref{id}), for $r=k$, we get
\[
\sum_{q=0}^{m}(-1)^{q}{m+k\choose q}S(m+k-q, k)S_q^{(k)}(n) =(-1)^mS(k+m, k)n^{m+k}.						
\]
In turn, if $k=1$, then the last relation turns into a well-known relation for the classical sums of powers \cite{Riordan}. The following theorem gives a some representation of the polynomials $\hat{S} _{m}^{(k)}(n) $.
\begin{theorem} \label{theorem1}
The polynomials $\hat{S}_{m}^{(k)}(n)$ can be written as
\begin{equation} 
\hat{S}_{m}^{(k)}(n)=\frac{1}{{m+k\choose k}}\sum_{q=0}^{m}(-1)^q{m+k\choose q}B_q^{(k)}S(m+k-q, k)n^{m+k-q},
\label{higher-polynomials}
\end{equation}
where $B_q^{(k)}$ are the Bernoulli numbers of higher order that are defined by the e.g.f.
\begin{equation}
\frac{t^k}{(e^t-1)^k}=\sum_{q\geq 0}B^{(k)}_q\frac{t^q}{q!}.
\label{Bern-high}
\end{equation}
\end{theorem}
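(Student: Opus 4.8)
The plan is to start from the closed form of the $k$-th power of the generating function and split it into a product of two power series whose Cauchy product manifestly reproduces the right-hand side of (\ref{higher-polynomials}). Since $G(n,t)=e^{t}(e^{nt}-1)/(e^{t}-1)$, we have
\[
\left(G(n,t)\right)^k=\frac{e^{kt}\left(e^{nt}-1\right)^k}{\left(e^{t}-1\right)^k}
=\frac{e^{kt}t^k}{\left(e^{t}-1\right)^k}\cdot\frac{\left(e^{nt}-1\right)^k}{t^k},
\]
and I would read off $\hat{S}_m^{(k)}(n)=m!\,[t^m]\left(G(n,t)\right)^k$ from the Cauchy product of the two factors, where $[t^m]$ denotes extraction of the coefficient of $t^m$.

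The two factors are exactly the generating series for the signed higher-order Bernoulli numbers and for the Stirling data. Replacing $t$ by $-t$ in the definition (\ref{Bern-high}) and using $e^{-t}-1=-(e^{t}-1)/e^{t}$ gives
\[
\frac{e^{kt}t^k}{\left(e^{t}-1\right)^k}=\frac{(-t)^k}{\left(e^{-t}-1\right)^k}
=\sum_{q\geq 0}(-1)^q B_q^{(k)}\frac{t^q}{q!},
\]
which is precisely where the factor $(-1)^q$ in (\ref{higher-polynomials}) originates. For the second factor I would use the Stirling expansion $(e^{s}-1)^k=k!\sum_{i\geq 0}S(i,k)s^i/i!$ with $s=nt$; since $S(i,k)=0$ for $i<k$, the powers $t^i$ begin at $i=k$, so the $t^k$ cancels and, after the shift $i=j+k$,
\[
\frac{\left(e^{nt}-1\right)^k}{t^k}=k!\sum_{j\geq 0}\frac{S(j+k,k)\,n^{j+k}}{(j+k)!}\,t^{j}.
\]

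Finally I would multiply the two series, collect the coefficient of $t^m$ (taking $q$ from the first factor and $m-q$ from the second) and multiply by $m!$, obtaining
\[
\hat{S}_m^{(k)}(n)=\sum_{q=0}^{m}\frac{m!\,k!}{q!\,(m+k-q)!}\,(-1)^q B_q^{(k)}\,S(m+k-q,k)\,n^{m+k-q}.
\]
The statement then follows from the elementary identity $m!\,k!/\bigl(q!\,(m+k-q)!\bigr)=\binom{m+k}{q}\big/\binom{m+k}{k}$, while the vanishing $S(m+k-q,k)=0$ for $q>m$ confirms that the sum need not run past $q=m$. I do not expect a genuine obstacle: the content lies entirely in choosing the factorization that separates the Bernoulli and Stirling data, and the only points demanding care are the sign and index bookkeeping — checking the simplification $e^{-t}-1=-(e^{t}-1)/e^{t}$ that converts (\ref{Bern-high}) into the series with $(-1)^q B_q^{(k)}$, and verifying that the $t^k$ in numerator and denominator of each factor cancels so that both are honest power series in $t$.
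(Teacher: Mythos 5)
Your proof is correct, and it rests on the same factorization as the paper's: your two factors $e^{kt}t^k/(e^t-1)^k$ and $(e^{nt}-1)^k/t^k$ are exactly the $k$-th powers of the e.g.f.'s $A(t)$ and $C(n,t)$ of (\ref{AB}), and your sign bookkeeping via $t\to-t$ is precisely the paper's identification of $A(t)^k$ with the e.g.f. of $\{(-1)^q B_q^{(k)}\}$. The difference lies in the direction of the argument and in how the Stirling factor is handled. The paper argues backwards from (\ref{higher-polynomials}): it invokes the N\"orlund--Stirling relation (\ref{Stirling1}) to replace $S(m+k-q,k)$ by ${m-q+k\choose m-q}B^{(-k)}_{m-q}$, reduces the claim via a binomial identity to the pure Bernoulli convolution (\ref{19}), $\hat{S}^{(k)}_m(n)=n^k\sum_{q=0}^m(-1)^q{m\choose q}B^{(k)}_qB^{(-k)}_{m-q}n^{m-q}$, and then recognizes this as the coefficient identity coming from $G(n,t)^k=A(t)^kC(n,t)^k$. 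You go forwards instead, expanding $(e^{nt}-1)^k/t^k$ directly by the standard Stirling e.g.f., so the Stirling numbers appear without any mention of $B^{(-k)}_m$, and your closing rearrangement $m!\,k!/\bigl(q!\,(m+k-q)!\bigr)={m+k\choose q}\big/{m+k\choose k}$ is the paper's identity ${m+k\choose q}{m-q+k\choose m-q}={m+k\choose k}{m\choose q}$ in disguise. Your route is more self-contained, needing no facts about N\"orlund polynomials; the paper's detour through $B^{(-k)}_m$ is not wasted on its side, however, since the intermediate formula (\ref{19}) is exactly what defines the polynomials $Q^{(k)}_m(z)$ and drives Lemma \ref{lemma1} and Theorem \ref{2} on rational roots. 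One tiny remark: in your Cauchy product the range $0\le q\le m$ is automatic, because the second factor contains only nonnegative powers of $t$, so the appeal to the vanishing of $S(m+k-q,k)$ for $q>m$ is superfluous.
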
 
Comparing (\ref {higher-polynomials}) with the formula (\ref{22}), we see that the Bernoulli numbers here are replaced by their higher analogs. Moreover we see that this expression, in contrast with (\ref{22}), involves the Stirling numbers of the second kind.

\subsection{Bernoulli numbers of higher order}

The classical Bernoulli numbers are known to be uniquely determined by the e.g.f. \cite{Graham}
\begin{equation}
\frac{t}{e^t-1}=\sum_{q\geq 0}B_q\frac{t^q}{q!}.
\label{Bernoulli}
\end{equation}
It follows from (\ref{Bernoulli}) the recurrence relation
\begin{equation}
\sum_{q=0}^{m}{m+1\choose q}B_{q}=\delta_{0, m}.
\label{rec-rel}
\end{equation}
This relation, in fact, is one of the many recurrence relations for the Bernoulli numbers (see, for example, \cite{Agoh}). For example, one can derive a countable set of recurrence relations of the form
\begin{equation}
\sum_{q=0}^{m}{m+k\choose q}S(m+k-q, k)B_{q}=\frac{m+k}{k}S(m+k-1, k-1),\;\;
\forall k\geq 1.
\label{rec-rel1}
\end{equation}
The numbers $B_n^{(k)} $ first appeared in the work of N\"orlund \cite{Norlund} in connection with the theory of finite differences and were subsequently investigated from various points of view (see, for example, \cite{Carlitz}). It is known that they are related to each other by the recurrence relation \cite{Norlund}
\begin{equation}
B_n^{(k+1)}=\frac{k-n}{k}B_n^{(k)}-nB_{n-1}^{(k)}.
\label{Norlund}
\end{equation}
It should be noted that $B^{(k)}_n$ for some fixed $n\geq 0$ is calculated as the value of some polynomial in $k$. In what follows we shall use for these polynomials the same notation $B^{(k)}_n$ in the hope that this will not lead to confusion. In the literature, polynomials of this type are called the N\"orlund polynomials. The first six of them are as follows
\[
B_0^{(k)}=1,\;\;
B_1^{(k)}=-\frac{1}{2}k,\;\;
B_2^{(k)}=\frac{1}{12}k\left(3k-1\right),\;\;
B_3^{(k)}=-\frac{1}{8}k^2\left(k-1\right),
\]
\[
B_4^{(k)}=\frac{k}{240}\left(15k^3-30k^2+5k+2\right),\;\;
B_5^{(k)}=-\frac{1}{96}k^2\left(k-1\right)\left(3k^2-7k-2\right).
\]
\begin{remark}
By standard arguments, using the e.g.f. (\ref{Bern-high}), we can derive the following recurrence relation:
\begin{equation}
\sum_{q=0}^{m}{m+k\choose q}S(m+k-q, k)B_{q}^{(r)}=\frac{{m+k\choose k}}{{m+k-r\choose k-r}}S(m+k-r, k-r),\;\; \forall k\geq r,
\label{impl1}
\end{equation}
the particular case of which is (\ref{rec-rel1}). Making use (\ref{impl1}), we can prove that polynomials defined by the formula (\ref{higher-polynomials}) satisfy the recurrence relation
(\ref{id}), but this is not necessary here.
\end{remark}

\subsection{Stirling numbers of the second kind}

The numbers $S(n, k)$ in the formula (\ref{higher-polynomials}), as mentioned above, are Stirling numbers of the second kind. As is known, they satisfy the identity
\begin{equation}
S(n, k)=S(n-1, k-1)+kS(n-1, k)
\label{rr}
\end{equation}
and some boundary conditions \cite{Graham}. The Stirling numbers $S(m + k, k)$ for some fixed $m\geq 0$ can be calculated as the values of some polynomial $f_m (k)$ of degree $2m$. These polynomials satisfy the recurrence identity
\begin{equation}
f_m(k)-f_m(k-1)=kf_{m-1}(k),
\label{Stirling}
\end{equation}
which easily follows from the identity (\ref{rr}). In the literature they are known as the Stirling polynomials \cite{Gessel}, \cite{Gessel1}, \cite{Jordan}. The comparison (\ref{Norlund}) and (\ref{Stirling}) shows that the N\"orlund and Stirling polynomials are related by the relation (see, for example, \cite{Adelberg})
\begin{equation}
f_m(k)={m+k\choose m}B_m^{(-k)}.
\label{Stirling1}
\end{equation}
Using (\ref{Stirling1}), we can rewrite (\ref{impl1}) in the form
\[
\sum_{q=0}^{m}{m\choose q}B_{m-q}^{(-k)}B_{q}^{(r)}=B_{m}^{(r-k)}.
\]
The last relation is completely obvious, since it is simply a overwriting of the relation $B^{r-k} = B^rB^{-k} $, where $B = B (t)$ is the e.g.f. of the Bernoulli numbers.

\subsection{Proof of the theorem \ref{theorem1}}

In the formula (\ref{higher-polynomials}) we can replace $S(m + k-q, k)$ by $f_ {m-q}(k)$ and get the following representation:
\begin{equation}
\hat{S}_{m}^{(k)}(n)=\frac{1}{{m+k\choose k}}\sum_{q=0}^{m}(-1)^q{m+k\choose q}{m-q+k\choose m-q}B_q^{(k)}B_{m-q}^{(-k)}n^{m-q+k}.
\label{18}
\end{equation}
In turn, using the binomial identity
\[
{m+k\choose q}{m-q+k\choose m-q}={m+k\choose k}{m\choose q},
\]
we obtain a simpler expression
\begin{equation}
\hat{S}_{m}^{(k)}(n)=n^k\sum_{q=0}^{m}(-1)^q{m\choose q}B_q^{(k)}B_{m-q}^{(-k)}n^{m-q}.
\label{19}
\end{equation}
The formula (\ref{19}) represents the binomial convolution of two sequences: $\{a_m:=(-1)^{m} B_m^{(k)}\}$ and $\{b_m:=B_m^{(- k)}n^{m + k}\}$. 
In turn, the e.g.f.'s of these sequences, as can be easily verified, are the $k$-degree of two e.g.f.'s
\begin{equation}
A(t)=\frac{-t}{e^{-t}-1}\;\;\mbox{and}\;\; C(n, t)=\frac{e^{nt}-1}{t}.
\label{AB}
\end{equation}
It is obvious that the representation of the e.g.f. (\ref{genf}) in the factorized form $G(n, t)=A(t)C(n, t)$ gives the simplest way to express the polynomials $\hat{S}_{m}(n)$ in terms of Bernoulli numbers. Since the expression (\ref{19}) is equivalent (\ref{18}), the theorem \ref{theorem1} should be considered proven.

\subsection{Some properties of the polynomials $\hat{S}_{m}^{(k)}(n)$}

In what follows we will consider these polynomials in the continuous variable $z\in\mathbb{C}$, so that we can talk about their derivatives. It is more convenient to investigate the polynomials\footnote{Obviously, $\hat{S}_{m}^{(k)}(z)=z^kQ_{m}^{(k)}(z)$.}
\[
Q_{m}^{(k)}(z)=\sum_{q=0}^{m}(-1)^q{m\choose q}B_q^{(k)}B_{m-q}^{(-k)}z^{m-q}.
\]
The following lemma will be useful for what follows.
\begin{lemma} \label{lemma1}
The following two identities are valid:
\begin{equation}
\sum_{q=0}^{m-1}{m\choose q}(m-q)B_q^{(k)}B_{m-q}^{(-k)}=\left\{ 
\begin{array}{l}
-kB_1,\;\;m= 1,\\[0.2cm]
kB_m,\;\;\forall m\geq 2
\end{array}
\right.
\label{191}
\end{equation}
and
\begin{equation}
\sum_{q=0}^{m-2}{m\choose q}(m-q)(m-q-1)B_q^{(k)}B_{m-q}^{(-k)}=\left\{ 
\begin{array}{l}
-k(3k-1)B_2
-2k^2B_{1},\;\;m= 2,\\[0.2cm]
-k((m+1)k-m+1)B_m
+mk^2B_{m-1},\;\;
\forall m\geq 3.
\end{array}
\right.
\label{1911}
\end{equation}
\end{lemma}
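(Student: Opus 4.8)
The plan is to argue with exponential generating functions, in the same spirit as the proof of Proposition \ref{proposition2.2}. Introduce $F(t)=\sum_{q\ge0}B_q^{(k)}t^q/q!=\left(t/(e^t-1)\right)^k$ and $H(t)=\sum_{q\ge0}B_q^{(-k)}t^q/q!=\left((e^t-1)/t\right)^k$, and note the fundamental relation $F(t)H(t)=1$, which underlies both identities. The extra factors $m-q$ and $(m-q)(m-q-1)$ are exactly what differentiation of $H$ supplies: the e.g.f. of $\{(m-q)B_{m-q}^{(-k)}\}$ is $tH'(t)$ and that of $\{(m-q)(m-q-1)B_{m-q}^{(-k)}\}$ is $t^2H''(t)$. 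Hence the left-hand side of (\ref{191}) is the coefficient of $t^m/m!$ in $F(t)\,tH'(t)$, and the left-hand side of (\ref{1911}) is the coefficient of $t^m/m!$ in $F(t)\,t^2H''(t)$; the upper limits $m-1$ and $m-2$ come for free, since the omitted terms carry a vanishing factor.

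Next I would exploit $F=1/H$ to collapse these products. With $L(t)=\log H(t)=k\bigl(\log(e^t-1)-\log t\bigr)$ one has $FH'=L'$ and $FH''=L''+(L')^2$. A short computation gives $tL'(t)=k\bigl(te^t/(e^t-1)-1\bigr)$, and since $te^t/(e^t-1)=t/(e^t-1)+t=\sum_{q\ge0}B_qt^q/q!+t$, this is $tL'(t)=k\bigl(\sum_{q\ge0}B_qt^q/q!+t-1\bigr)$. Reading off the coefficient of $t^m/m!$ in $F\,tH'=tL'$ proves (\ref{191}) at once: the constant term cancels ($B_0-1=0$), the correction $+t$ only shifts the linear term (replacing $B_1=-\tfrac12$ by $B_1+1=\tfrac12$, which is the exceptional value at $m=1$), and for $m\ge2$ the coefficient is simply $kB_m$.

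For (\ref{1911}) the same device is applied with the second derivative. Put $P(t)=\sum_{q\ge0}B_qt^q/q!+t-1=\tfrac12 t+\sum_{q\ge2}B_qt^q/q!$, which has no constant term, so that $t^2FH''=t^2\bigl(L''+(L')^2\bigr)=k\,(tP'-P)+k^2P^2$. Extracting the coefficient of $t^m/m!$ turns the linear piece into $k(m-1)p_m$ and the quadratic piece into $k^2$ times a self-convolution of $P$, where $p_1=\tfrac12$ and $p_j=B_j$ for $j\ge2$. The self-convolution I would evaluate with the classical identity $\sum_{j=0}^m{m\choose j}B_jB_{m-j}=-mB_{m-1}-(m-1)B_m$: isolating the boundary indices $j\in\{0,1,m-1,m\}$ and restoring the shifted value $p_1=\tfrac12$ makes the middle range contribute $-(m+1)B_m$ and the two end terms contribute $mB_{m-1}$. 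Collecting the pieces reproduces $-k\bigl((m+1)k-m+1\bigr)B_m+mk^2B_{m-1}$ for $m\ge3$, while for $m=2$ the convolution collapses to the single term ${2\choose1}p_1^2=\tfrac12$ and the same formula gives $kB_2+\tfrac12k^2$, which one rewrites as $-k(3k-1)B_2-2k^2B_1$.

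The only genuinely delicate point, and the step I would watch most carefully, is the low-order bookkeeping in the second identity. The appearance of the shifted coefficient $p_1=B_1+1=\tfrac12$ in place of $B_1$, together with the separate accounting of the boundary terms in the Bernoulli self-convolution, is precisely what forces $m=1$ and $m=2$ to be listed apart from the generic cases; getting these edge contributions and the exact form of the convolution identity right is where the proof can most easily go wrong.
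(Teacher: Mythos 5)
Your proof is correct and takes essentially the same route as the paper: both arguments identify the left-hand sides of (\ref{191}) and (\ref{1911}) as the coefficients of $t^m/m!$ in $B^{(k)}(t)\,t\frac{d}{dt}B^{(-k)}(t)$ and $B^{(k)}(t)\,t^2\frac{d^2}{dt^2}B^{(-k)}(t)$, use $B^{(k)}(t)B^{(-k)}(t)=1$ to reduce everything to the single function $A(t)=te^t/(e^t-1)$ (your $P=A-1$; the paper's expression $k^2(A-1)^2-k(A^2-tA-1)$ coincides identically with your $k(tP'-P)+k^2P^2$), and then read off coefficients. Your classical convolution identity $\sum_{j=0}^{m}{m\choose j}B_jB_{m-j}=-mB_{m-1}-(m-1)B_m$ is precisely the N\"orlund recurrence (\ref{Norlund}) at order two, which is what the paper invokes at the same step, and your low-order bookkeeping at $m=1$ and $m=2$ is carried out correctly.
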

\begin{proof}
Let $B^{(k)}=B^{(k)}(t)$ be the e.g.f. of the Bernoulli numbers of higher order (\ref{Bern-high}). We have
\[
t\frac{d B^{(-k)}}{d t}B^{(k)}=kt\frac{e^t}{e^t-1}-k=k\left(A-1\right),
\]
where $A=A(t)$ is supposed to be the e.g.f. defined in (\ref{AB}). This relation implies (\ref{191}). To show (\ref{1911}), one needs to calculate
\begin{eqnarray}
t^2\frac{d^2 B^{(-k)}}{d t^2}B^{(k)}&=&\frac{t^2(k^2-k)e^{2t}}{(e^t-1)^2}+\frac{(t-2k)kte^t}{e^t-1}+k^2+k\nonumber\\
&=&k^2\left(A-1\right)^2-k\left(A^2-tA-1\right). \label{21}
\end{eqnarray}
Taking into account the recurrence relation (\ref{Norlund}), we derive from (\ref{21}) the relation (\ref{1911}).
\end{proof}
\begin{remark}
We could write (\ref{191}) as
\[
\sum_{q=0}^{m-1}{m\choose q}(m-q)B_q^{(k)}B_{m-q}^{(-k)}=(-1)^mkB_m,\;\;\forall m\geq 1.
\]
However, considering the fact that the Bernoulli numbers are nonzero only for even $m$ and $m=1$, we can get rid of the minus sign. A similar remark applies to the formula (\ref{1911}).
\end{remark}
Now we in a position to formulate some statements about the rational zeros of the polynomials $Q_{m}^{(k)}(z)$.
\begin{theorem} \label{2} The following two statements are true:
\begin{itemize}
\item[1)]
The polynomial $Q_{m}^{(k)}(z)$ has a simple root $z_0=-1$ for even $m$ and $m=1$ and any $k\geq 1$;
\item[2)]
The polynomial $Q_{m}^{(k)}(z)$ has a double root $z_0=-1$ for odd  $m\geq 3$ and any $ k\geq 1$.
\end{itemize}
\end{theorem}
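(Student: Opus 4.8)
The plan is to show that $z_0=-1$ always annihilates $Q_m^{(k)}$, and then to read off the multiplicity by evaluating successive derivatives at $-1$ with the help of Lemma \ref{lemma1}; the parity of $m$ will enter only through the vanishing of the odd-indexed Bernoulli numbers.

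First I would evaluate $Q_m^{(k)}(-1)$. Setting $z=-1$ turns the factor $z^{m-q}$ into $(-1)^{m-q}$, so that $Q_m^{(k)}(-1)=(-1)^m\sum_{q=0}^m\binom{m}{q}B_q^{(k)}B_{m-q}^{(-k)}$. This sum is precisely the binomial convolution of the sequences $\{B_q^{(k)}\}$ and $\{B_q^{(-k)}\}$, whose e.g.f.'s are $t^k/(e^t-1)^k$ and its reciprocal and hence multiply to $1$ (this is the identity $B^{-k}=B^{0}B^{-k}$ already used above). Therefore the convolution equals $\delta_{0,m}$, and $Q_m^{(k)}(-1)=0$ for every $m\geq 1$ and every $k\geq 1$. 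Thus $-1$ is a root in all cases, and it remains only to determine its order.

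Next I would differentiate once. Since $(Q_m^{(k)})'(-1)=(-1)^{m-1}\sum_{q=0}^{m-1}\binom{m}{q}(m-q)B_q^{(k)}B_{m-q}^{(-k)}$, identity (\ref{191}) of Lemma \ref{lemma1} gives $(Q_1^{(k)})'(-1)=-kB_1=k/2$ and $(Q_m^{(k)})'(-1)=(-1)^{m-1}kB_m$ for $m\geq 2$. Invoking the classical fact that $B_m\neq 0$ for even $m$ while $B_m=0$ for odd $m\geq 3$, I conclude that the first derivative is nonzero for $m=1$ and for even $m$, so that $-1$ is a simple root there, which establishes statement 1); and that it vanishes for odd $m\geq 3$, so that $-1$ is a root of multiplicity at least two in that range.

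Finally, for odd $m\geq 3$ I would compute the second derivative, $(Q_m^{(k)})''(-1)=(-1)^m\sum_{q=0}^{m-2}\binom{m}{q}(m-q)(m-q-1)B_q^{(k)}B_{m-q}^{(-k)}$, and apply identity (\ref{1911}). Because $B_m=0$ for odd $m\geq 3$, the term carrying $B_m$ drops out and only $(-1)^m mk^2B_{m-1}=-mk^2B_{m-1}$ survives; since $m-1$ is even and at least $2$, we have $B_{m-1}\neq 0$, whence $(Q_m^{(k)})''(-1)\neq 0$. Thus $-1$ is a root of multiplicity exactly two, proving statement 2). The argument is driven entirely by Lemma \ref{lemma1}, so I anticipate no genuine obstacle; the only external input is the nonvanishing of the even-indexed Bernoulli numbers, and the sole point requiring care is the consistent bookkeeping of the parity-dependent signs $(-1)^m$ across the three evaluations.
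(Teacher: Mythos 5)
Your proof is correct and is essentially identical to the paper's own argument: you show $Q_m^{(k)}(-1)=0$ via the convolution of $\{B_q^{(k)}\}$ and $\{B_q^{(-k)}\}$ (whose e.g.f.'s multiply to $1$), then obtain simplicity of the root from identity (\ref{191}) and the nonvanishing of $B_m$ for even $m$ and $m=1$, and exact double multiplicity for odd $m\geq 3$ from identity (\ref{1911}), where the $B_m$ term drops and $-mk^2B_{m-1}\neq 0$ survives. Your sign bookkeeping, including $(Q_1^{(k)})'(-1)=-kB_1=k/2$, agrees with the paper's remark that the $(-1)^m$ factors are harmless because $B_m$ vanishes for odd $m\geq 3$.
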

\begin{proof}
It is obvious that
\[
Q_{m}^{(k)}(-1)=(-1)^{m}\sum_{q=0}^{m}{m\choose q}B_q^{(k)}B_{m-q}^{(-k)}=0,\;\;\forall m\geq 1,
\]
i.e, for any $m\geq 1$ and $k\geq 1$, the polynomial $Q_{m}^{(k)}(z)$ has the root $z_0=-1$. Further, we have
\[
\frac{d Q_{m}^{(k)}(z)}{dz}\left|_{z=-1} \right.=(-1)^{m-1}\sum_{q=0}^{m-1}{m\choose q}(m-q)B_q^{(k)}B_{m-q}^{(-k)}
\]
and by (\ref{191}), we obtain
\[
\frac{d Q_{m}^{(k)}(z)}{dz}\left|_{z=-1} \right.=
-kB_m,
\]
We know that the Bernoulli numbers $B_m$ are nonzero for even $m$ and $m=1$. Thus, the first part of the sentence (\ref{2}) is proved. For odd $m\geq 3$, the Bernoulli numbers are equal
zero and therefore the multiplicity of the root $z_0=-1 $ is at least equal to two. Next, we calculate
\[
\frac{d^2 Q_{m}^{(k)}(z)}{dz^2}\left|_{z=-1} \right.=(-1)^{m}\sum_{q=0}^{m-2}{m\choose q}(m-q)(m-q-1)B_q^{(k)}B_{m-q}^{(-k)}.
\]
Now we need to use the identity (\ref{1911}). By virtue of this identity, the second derivative $Q_{m}^{(k)}(z)$  at $z=-1$ for any $m\geq 2$ and $k\geq 1$ is not zero. Thus the theorem is proved.
\end{proof}
\begin{remark}
It was shown in the paper \cite{Kimura} that the rational zeros of the polynomials $\hat{S}_{m}^{(1)}(z)$ are only $0, -1$, and $-1/2$, and the last value is a simple root for these polynomials with even numbers.
\end{remark}
The polynomials $Q_{2}^{(k)}(z)$ and $Q_{3}^{(k)}(z)$, as is easily seen, have the following form:
\[
Q_{2}^{(k)}(z)=\frac{k}{12}(z+1)\left((3k+1)z+3k-1\right)\;\;\mbox{and}\;\;
Q_{3}^{(k)}(z)=\frac{k^2}{8}(z+1)^2\left((k+1)z+k-1\right).
\]
Hence it is obvious that
\[
Q_{2}^{(k)}\left(-\frac{3k-1}{3k+1}\right)=0,\;\;\forall k\geq 1\;\;\mbox{and}\;\;  Q_{3}^{(k)}\left(-\frac{k-1}{k+1}\right)=0,\;\;\forall k\geq 2.
\]

\section{Other generalizations  of power-sums}

\subsection{Multiple sums}

In the paper \cite{Svinin}, we considered multiple sums of the form
\begin{equation}
\tilde{\mathcal{S}}_{m}^{(k)}(n):=\sum_{\{q\}\in B_{k, kn}}\left(q_1^{m}+(q_2-n)^{m}+\cdots+(q_k-kn+n)^{m}\right),
\label{sums11} 
\end{equation}
where  the exponent $m$ is supposed to be odd. Here $B_{k, s}:=\{q_j : 1\leq q_1\leq \cdots \leq q_k\leq s\}$. Obviously, if $m$ is odd, then the value of $q^m$ with negative $q$ in (\ref{sums11}) is  $q^m = -|q|^m$. According to this rule, a sum of the form (\ref{sums11}) can be rewritten as
\begin{equation}
\tilde{\mathcal{S}}_{m}^{(k)}(n)=\sum_{q=1}^{kn}c_q(k, n)q^m
\label{defin}
\end{equation}
with some integer coefficients $c_q(k, n)$.
\begin{example}
In the case $k=2$, we have
\begin{eqnarray}
\tilde{\mathcal{S}}_{m}^{(2)}(n)&=&\sum_{\{q\}\in B_{2, 2n}}\left(q_1^{m}+(q_2-n)^{m}\right)\nonumber\\
                                &=&\sum_{j=1}^{n-1}\left(\sum_{q=1}^{j}q^m-j(n-j)^m\right)+\sum_{j=1}^{n}\left(\sum_{q=1}^{n+j}q^m+(n+j)j^m\right).\nonumber
\end{eqnarray}
Hence we get
\[
c_q(2, n)=\left\{
\begin{array}{l}
2n+q+1,\;\; 1\leq q\leq n, \\[0.2cm]
2n-q+1,\;\; n+1\leq q\leq 2n. 
\end{array}
\right.
\]
\end{example}

At first glance, multiple sums (\ref{sums11}) have nothing to do with  higher order power-sums  (\ref{sums}). However, in the paper \cite{Svinin}, it was conjectured that, in the case of odd $m$, the sum $\tilde{\mathcal{S}}_{m}^{(k)}(n)$ can be expressed  as\footnote{In this paper, the sums $S_{m}^{(k)}(n)$ were not identified as sums of higher orders.}
\begin{equation}
\tilde{\mathcal{S}}_{m}^{(k)}(n)=\sum_{q=0}^{k-1}{k(n+1)\choose q} S_{m}^{(k-q)}(n).
\label{relationsh}
\end{equation}
This hypothesis was based on numerous calculations using computer algebra method. Note that for $k=1$ the hypothesis is clear, and for $k=2$ it is easy to prove. By the way, if (\ref{relationsh}) is true, then it follows that the integer coefficients $c_q(k, n)$ are, in addition, non-zero and positive.

\subsection{Binomial sums}

On the right-hand side of (\ref{relationsh}) there is a sum, which we, somewhat conditionally, call binomial. Let us, throughout the  rest of the paper, study a sum of the form \cite{Svinin}
\begin{equation}
\mathcal{S}_{m}^{(k)}(n)=\sum_{q=0}^{k-1}{k(n+1)\choose q} S_{m}^{(k-q)}(n).
\label{sums1}
\end{equation}
In what follows, it is useful to consider its particular case
\[
\mathcal{S}_{m}^{(k)}(1)=\sum_{q=0}^{k-1}{2k\choose q} (k-q)^m.
\]
Observe that in the case of an odd exponent $m$, we can write
\begin{equation}
2\mathcal{S}_{m}^{(k)}(1)=\sum_{q=0}^{2k}{2k\choose q}|k-q|^m.
\label{23}
\end{equation}
On the right-hand side of this identity there is the binomial sum which was investigated in the papers \cite{Strazdins}, \cite{Tuenter}. Using the relation (\ref{23}), we can transfer some results of \cite{Tuenter} to the sum $\mathcal{S}_{m}^{(k)}(1)$.
\begin {proposition}
The sum $\mathcal{S}_{m}^{(k)}(1)$ satisfies the recurrence relation
\begin {equation}
\mathcal{S}_{m+2}^{(k)}(1)=k^2\mathcal{S}_{m}^{(k)}(1)-2k(2k-1)\mathcal{S}_{m}^{(k-1)}(1).
\label{241}
\end{equation}
\end{proposition}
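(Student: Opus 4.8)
The plan is to work directly with the symmetric binomial representation supplied by (\ref{23}) and to reduce the whole recurrence to a single elementary identity between binomial coefficients. Since $m$ is odd, I would first transform the right-hand side of (\ref{23}) by the substitution $j=k-q$, which turns it into the symmetric form
\[
2\mathcal{S}_{m}^{(k)}(1)=\sum_{j=-k}^{k}{2k\choose k-j}|j|^m.
\]
Writing $U_m(k)$ for this right-hand side, the assertion (\ref{241}) becomes equivalent to $U_{m+2}(k)=k^2U_m(k)-2k(2k-1)U_m(k-1)$, because $m+2$ is again odd and $2\mathcal{S}^{(k-1)}_m(1)=U_m(k-1)$.

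The key observation is that $|j|^{m+2}=j^2|j|^m$ for every integer $j$, so that $U_{m+2}(k)=\sum_{j=-k}^{k}{2k\choose k-j}j^2|j|^m$, and that the weight $j^2{2k\choose k-j}$ splits along the rows $2k$ and $2k-2$. Concretely, I would establish the identity
\[
j^2{2k\choose k-j}=k^2{2k\choose k-j}-2k(2k-1){2k-2\choose k-1-j},
\]
valid for all integers $j$. This is most easily proved in the equivalent form $(k^2-j^2){2k\choose k-j}=2k(2k-1){2k-2\choose k-1-j}$: factoring $k^2-j^2=(k-j)(k+j)$ and expanding both sides in factorials, each side reduces to $(2k)!/\bigl((k-1-j)!\,(k-1+j)!\bigr)$. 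At the boundary $j=\pm k$ the right-hand binomial vanishes, consistently with $k^2-j^2=0$.

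With this identity in hand the conclusion is immediate. Substituting it into the sum for $U_{m+2}(k)$ produces two pieces: the first gives $k^2\sum_{j}{2k\choose k-j}|j|^m=k^2U_m(k)$, while the second gives $-2k(2k-1)\sum_{j}{2k-2\choose k-1-j}|j|^m$. Since ${2k-2\choose k-1-j}={2(k-1)\choose (k-1)-j}$ and this coefficient vanishes outside $-(k-1)\le j\le k-1$, the latter sum is exactly $U_m(k-1)$. Hence $U_{m+2}(k)=k^2U_m(k)-2k(2k-1)U_m(k-1)$, and dividing by $2$, which is legitimate because both $m$ and $m+2$ are odd so (\ref{23}) applies to both, yields (\ref{241}).

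I expect the only nontrivial step to be the recognition and verification of the binomial identity displayed above; once it is available, the rest is bookkeeping driven by the symmetry $j\mapsto-j$. An alternative route would be simply to invoke the recurrence for the absolute binomial moment $\sum_{q}{2k\choose q}|k-q|^m$ established in \cite{Tuenter} and to translate it through (\ref{23}), but the self-contained factorial computation seems both shorter and more transparent.
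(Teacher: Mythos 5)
Your proof is correct and is essentially the paper's own argument in symmetrized coordinates: your key identity $(k^2-j^2){2k\choose k-j}=2k(2k-1){2k-2\choose k-1-j}$ is exactly the paper's identity ${2k\choose q}={2k-2\choose q-1}\frac{2k(2k-1)}{q(2k-q)}$ under the substitution $j=k-q$, applied to the same difference $k^2\mathcal{S}_{m}^{(k)}(1)-\mathcal{S}_{m+2}^{(k)}(1)$. The only deviation is your detour through (\ref{23}), which needlessly restricts the argument to odd $m$; the paper computes directly with the one-sided sum $\sum_{q=0}^{k-1}{2k\choose q}(k-q)^m$ and so proves (\ref{241}) without any parity assumption.
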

\begin{proof}
We have
\begin{eqnarray}
k^2\mathcal{S}_{m}^{(k)}(1)-\mathcal{S}_{m+2}^{(k)}(1)&=&\sum_{q=1}^{k-1}{2k\choose q} (k-q)^m\left(k^2-(k-q)^2\right)\nonumber\\
&=&\sum_{q=1}^{k-1}{2k\choose q} (k-q)^mq\left(2k-q\right).\nonumber
\end{eqnarray}
Using the binomial identity
\[
{2k\choose q}={2k-2\choose q-1}\frac{2k(2k-1)}{q(2k-q)},
\]
we obtain, as a result, the relation (\ref{241}).
\end{proof}
\begin{proposition} There is an infinite set of polynomials $\{P_m(k): m\geq 0 \}$ such that,
\begin{equation}
\mathcal{S}_{2r+1}^{(k)}(1)=P_r(k)\frac{k}{2}{2k\choose k}.
\label{251} 
\end{equation}
\end{proposition}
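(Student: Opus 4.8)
The plan is to argue by induction on $r$, using the recurrence (\ref{241}) to pass from exponent $2r+1$ to $2r+3$, with the central binomial coefficient ${2k\choose k}$ serving as the common normalizing factor at every stage. The whole statement then reduces to showing that the quotient $P_r(k):=\mathcal{S}_{2r+1}^{(k)}(1)\big/\big(\tfrac{k}{2}{2k\choose k}\big)$ is a polynomial in $k$, and the recurrence (\ref{241}) will give a clean polynomial recursion for $P_r$.

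First I would settle the base case $r=0$ by directly evaluating $\mathcal{S}_1^{(k)}(1)=\sum_{q=0}^{k-1}{2k\choose q}(k-q)$. Writing $(k-q){2k\choose q}=k{2k\choose q}-2k{2k-1\choose q-1}$ and using the partial-sum identities $\sum_{q=0}^{k-1}{2k\choose q}=\tfrac12\bigl(2^{2k}-{2k\choose k}\bigr)$ and $\sum_{q=0}^{k-2}{2k-1\choose q}=2^{2k-2}-{2k-1\choose k-1}$, together with ${2k-1\choose k-1}=\tfrac12{2k\choose k}$, the powers of two cancel and one is left with $\mathcal{S}_1^{(k)}(1)=\tfrac{k}{2}{2k\choose k}$, so that $P_0(k)=1$. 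Equivalently, one may simply invoke the known evaluation $\sum_{q=0}^{2k}{2k\choose q}|k-q|=k{2k\choose k}$ of \cite{Tuenter} through the identity (\ref{23}).

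For the inductive step, assume $\mathcal{S}_{2r+1}^{(j)}(1)=P_r(j)\tfrac{j}{2}{2j\choose j}$ holds for all $j\geq 1$ with $P_r$ a polynomial. Applying (\ref{241}) with $m=2r+1$ and substituting this form for both $\mathcal{S}_{2r+1}^{(k)}(1)$ and $\mathcal{S}_{2r+1}^{(k-1)}(1)$, the decisive simplification is the ratio ${2k-2\choose k-1}=\tfrac{k}{2(2k-1)}{2k\choose k}$, which exactly cancels the factor $2k(2k-1)$ produced by the recurrence. After this cancellation every term carries the common factor $\tfrac{k}{2}{2k\choose k}$, and collecting coefficients yields
\[
\mathcal{S}_{2r+3}^{(k)}(1)=P_{r+1}(k)\frac{k}{2}{2k\choose k},\qquad P_{r+1}(k)=k^2P_r(k)-k(k-1)P_r(k-1).
\]
Since the right-hand side is a polynomial in $k$ whenever $P_r$ is, the family $\{P_r:r\geq 0\}$ is well defined and consists of polynomials, which completes the induction.

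The only genuinely computational point I expect is the base case: the cancellation of the $2^{2k}$-type contributions in $\mathcal{S}_1^{(k)}(1)$ must be carried out with care (or bypassed by quoting \cite{Tuenter}). Once that is in hand the inductive step is purely formal, the crux being that $2k(2k-1)$ is precisely the factor needed to convert ${2k-2\choose k-1}$ into ${2k\choose k}$ up to a rational multiple, so that ${2k\choose k}$ persists unchanged as the common normalization and $P_{r+1}$ stays polynomial.
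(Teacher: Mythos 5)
Your proposal is correct and follows essentially the same route as the paper: induction on $r$ via the recurrence (\ref{241}), with the identity $2k(2k-1){2k-2\choose k-1}=k^2{2k\choose k}$ preserving the normalizing factor $\tfrac{k}{2}{2k\choose k}$ and yielding exactly the paper's recursion $P_{r+1}(k)=k^2P_r(k)-k(k-1)P_r(k-1)$. The only (immaterial) difference is in the base case, where you evaluate $\mathcal{S}_1^{(k)}(1)$ through partial-sum identities with cancelling $2^{2k}$-terms, whereas the paper splits $(k-q)=\tfrac12\bigl((2k-q)-q\bigr)$ and telescopes; both computations are valid and give $P_0(k)=1$.
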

\begin{proof} 
Let us first calculate $\mathcal{S}_{1}^{(k)}(1)$. We have
\begin{eqnarray}
\mathcal{S}_{1}^{(k)}(1)&=&\sum_{q=0}^{k}{2k\choose q} (k-q)=\frac{1}{2}\sum_{q=0}^{k}{2k\choose q} \left((2k-q)-q\right)\nonumber\\
&=&\frac{1}{2}\sum_{q=0}^{k}{2k\choose q} (2k-q)-\frac{1}{2}\sum_{q=1}^{k}{2k\choose q}q.\nonumber
\end{eqnarray}
Using binomial identities
\[
{2k\choose q}=\frac{2k}{2k-q}{2k-1\choose q}=\frac{2k}{q}{2k-1\choose q-1},
\]
we get
\[
\mathcal{S}_{1}^{(k)}(1)=k\sum_{q=0}^{k}{2k-1\choose q}-k\sum_{q=1}^{k}{2k-1\choose q-1}=k{2k-1\choose k}=\frac{k}{2}{2k\choose k}.
\]
Thus, $\mathcal{S}_{1}^{(k)}(1) $ is actually calculated by the formula (\ref{251}), with $P_0(k)=1$. Now, using the identity (\ref{241}) and the binomial identity
\[
2k(2k-1){2k-2\choose k-1}=k^2{2k\choose k},
\]
we see that if $\mathcal{S}_{2r+1}^{(k)}(1)$ is expressed by the formula (\ref{251}) with some polynomial $P_r(k)$, then $\mathcal{S}_{2r+3}^{(k)}(1)$ is also expressed by this expression with some polynomial $P_{r+1}(k)$, defined by 
\begin{equation}
P_{r+1}(k)=k^2P_{r}(k)-k(k-1)P_{r}(k-1).
\label{pol}
\end{equation}
Therefore, by the method of mathematical induction, we prove the proposition.
\end{proof} 
It is important to observe that the polynomials $P_{r}(k)$ are closely related to the well-known Gandhi polynomials \cite{Gandhi}, that are defined by the recurrence relation
\[
F_{r+1}(k)=(k+1)^2F_{r}(k+1)-k^2F_{r}(k),
\]
starting with $F_1(k)=1$. Indeed, it is not hard to see that $P_r(k)=(-1)^{r+1}kF_{r}(-k)$ for $r\geq 1$. The first six Gandhi polynomials are as follows:
\[
F_1(k)=1,\;\;
F_2(k)=2k+1,\;\;
F_3(k)=6k^2+8k+3,\;\;
F_4(k)=24k^3+60k^2+54k+17,
\]
\[
F_5(k)=120k^4+480k^3+762k^2+556k+155,
\]
\[
F_6(k)= 720k^5+4200k^4+10248k^3+12840k^2+8146k+2073.
\]
\begin{remark}
The Gandhi polynomials appeared in the paper \cite{Gandhi}, in which the hypothesis was formulated that
$F_r(0)=|G_{2r}|$, where $G_{2r}=2(1-4^r)B_{2r}$ are the alternating Genocci numbers determined by the e.g.f.
\[
\frac{2t}{e^t+1}=t+\sum_{q \geq 1}G_{2q}\frac{t^{2q}}{(2q)!}.
\] 
This hypothesis was proved in the papers \cite{Carlitz2}, \cite{Riordan1}. It should be noted that the Genocci numbers, unlike the Bernoulli numbers, are integers.
\end{remark}
\begin{remark}
The Gandhi polynomials can be written as $F_{r}(k)=F_r(1, 1, k)$, where $F_r(x, y, z)$ are some polynomials of three variables that are defined by the recurrence relation
\[
F_{r+1}(x, y, z)=(z+x)(z+y)F_{r}(x, y, z+1)-z^2F_{r}(x, y, z),
\]
starting with $F_1(x, y, z)=1$. In the literature, these polynomials are known as the Dumont-Foath polynomials \cite{Carlitz1}, \cite{Dumont}. It turns out that these polynomials are symmetric with respect to the variables $x, y, z$. The main property of these polynomials is that $F_r(1, 1, 1)=|G_{2r+2}|$.
\end{remark}

We observe now that  the formula (\ref{251}) can be rewritten in the form
\[
\mathcal{S}_{2r+1}^{(k)}(1)=(-1)^{r+1}F_r(-k)k^2{2k-1\choose k-1}
\]
for $r\geq 1$.

\subsection{Polynomials corresponding to binomial sums}

Let us define the set of polynomials $\{\hat{\mathcal{S}}_m^{(k)}(z): m\geq 1\} $ by the relation
\[
\hat{\mathcal{S}}_m^{(k)}(z)=\sum_{q=0}^{k-1}{k(z+1)\choose q} \hat{S}_{m}^{(k-q)}(z).
\]
Our goal is to describe these polynomials. For the degree $m=1$ we can formulate the following result:
\begin{proposition} \label{pr:3.2}
The polynomial $\hat{\mathcal{S}}_1^{(k)}(z)$ can be  defined by 
\[
\hat{\mathcal{S}}_1^{(k)}(z)=\frac{z(z+1)}{2}k{k(z+1)-1\choose k-1},\;\;
\forall k\geq 1.
\]
\end{proposition}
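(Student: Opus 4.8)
The plan is to reduce the statement to a single binomial identity and then prove that identity by Lagrange inversion. First I would unpack the definition: since $\hat{\mathcal S}_1^{(k)}(z)=\sum_{q=0}^{k-1}\binom{k(z+1)}{q}\hat S_1^{(k-q)}(z)$, I need an explicit formula for $\hat S_1^{(j)}(z)$. Taking $m=1$ in formula~(\ref{19}) and using $B_0^{(j)}=1$, $B_1^{(j)}=-\tfrac12 j$ (so $B_1^{(-j)}=\tfrac12 j$), one obtains $\hat S_1^{(j)}(z)=\tfrac{j}{2}z^{j}(z+1)$. Substituting and pulling out the common factor $\tfrac{z+1}{2}$ turns the claim into
\[
\sum_{q=0}^{k-1}\binom{k(z+1)}{q}(k-q)z^{k-q}=zk\binom{k(z+1)-1}{k-1}.
\]
Since the $q=k$ term vanishes (because of the factor $k-q$), I may as well extend the sum to $q=k$; and since $\binom{k(z+1)-1}{k}/\binom{k(z+1)-1}{k-1}=z$, the right-hand side equals $k\binom{k(z+1)-1}{k}$. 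So the whole proposition comes down to showing $\sum_{q=0}^{k}\binom{k(z+1)}{q}(k-q)z^{k-q}=k\binom{k(z+1)-1}{k}$.

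The feature that makes this awkward --- and that I expect to be the main obstacle --- is that the upper index $k(z+1)$ grows together with the summation length $k$, which defeats a naive induction on $k$. This ``$\binom{k\alpha}{q}$'' shape is exactly the signature of Lagrange inversion, so I would introduce the formal series $w=w(y)$ (coefficients polynomial in $z$) defined by $w=y(1+w)^{z+1}$, i.e.\ $\phi(w)=(1+w)^{z+1}$. Lagrange inversion with $H(w)=w^{r}$ gives $[y^k]w^{r}=\tfrac{r}{k}[w^{k-r}](1+w)^{k(z+1)}=\tfrac{r}{k}\binom{k(z+1)}{k-r}$; taking $r=k-q$ yields $\binom{k(z+1)}{q}(k-q)=k\,[y^k]w^{k-q}$. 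Hence the summand equals $k[y^k](zw)^{k-q}$, and because $w=O(y)$ the terms with $j>k$ contribute nothing, so summing the resulting geometric series gives
\[
\sum_{q=0}^{k}\binom{k(z+1)}{q}(k-q)z^{k-q}=k\,[y^k]\sum_{j\ge0}(zw)^{j}=k\,[y^k]\frac{1}{1-zw}.
\]

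The delicate step is to evaluate $[y^k]\tfrac{1}{1-zw}$, where I would switch to the ``diagonal'' form of Lagrange--B\"urmann, $[y^k]\tfrac{H(w)}{1-y\phi'(w)}=[w^k]H(w)\phi(w)^k$. A short computation gives $\phi(w)-w\phi'(w)=(1+w)^{z}(1-zw)$, whence $\tfrac{1}{1-y\phi'(w)}=\tfrac{\phi(w)}{\phi(w)-w\phi'(w)}=\tfrac{1+w}{1-zw}$; choosing $H(w)=\tfrac{1}{1+w}$ then makes $\tfrac{H(w)}{1-y\phi'(w)}$ equal $\tfrac{1}{1-zw}$ exactly, so
\[
[y^k]\frac{1}{1-zw}=[w^k](1+w)^{k(z+1)-1}=\binom{k(z+1)-1}{k}.
\]
Therefore the left-hand sum equals $k\binom{k(z+1)-1}{k}=zk\binom{k(z+1)-1}{k-1}$, and multiplying back by $\tfrac{z+1}{2}$ yields the asserted formula for $\hat{\mathcal S}_1^{(k)}(z)$. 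The only genuinely tricky bookkeeping is getting the factor $\tfrac{1+w}{1-zw}$ correct in the Lagrange--B\"urmann step; everything else is routine coefficient extraction, and I would double-check the whole chain against the base cases $k=1,2$.
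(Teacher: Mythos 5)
Your argument is correct, but it proves the key identity by a genuinely different method than the paper. You correctly reduce the proposition, via $\hat{S}_1^{(j)}(z)=\tfrac{j}{2}z^j(z+1)$ (which matches the paper's observation $S_1^{(k)}(z)=k(z+1)z^k/2$), to the single binomial identity
\[
\sum_{q=0}^{k-1}{k(z+1)\choose q}(k-q)z^{k-q}=zk{k(z+1)-1\choose k-1}=k{k(z+1)-1\choose k},
\]
which is precisely the paper's Lemma \ref{lemma2} multiplied through by $kz$ (your ratio step ${k(z+1)-1\choose k}=z{k(z+1)-1\choose k-1}$ is right, since $k(z+1)-1-(k-1)=kz$). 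The paper proves this identity in an entirely elementary but laborious way: both sides are polynomials of degree $k-1$ in $z$, so it suffices to match the derivatives $g_k^{(r)}(0)=\tilde{g}_k^{(r)}(0)$ for $r=0,\dots,k-1$, and this is done by an iterated partition argument on the index sets $D_q$, peeling off one boundary index at each step until the relation trivializes. You instead invoke Lagrange--B\"urmann inversion for $w=y(1+w)^{z+1}$: the first form gives $(k-q){k(z+1)\choose q}=k\,[y^k]w^{k-q}$, the geometric series collapses the sum to $k\,[y^k](1-zw)^{-1}$, and the second (diagonal) form with $H(w)=1/(1+w)$, together with the computation $1-y\phi'(w)=(1-zw)/(1+w)$, yields $[y^k](1-zw)^{-1}={k(z+1)-1\choose k}$; I verified each of these steps, including that the formal-series manipulations are legitimate over $\mathbf{Q}[z]$ and that the identity checks out at $k=1,2$. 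The trade-off: your route is much shorter and explains structurally why a sum with the coupled shape ${k(z+1)\choose q}$ closes up (it is a diagonal coefficient), but it imports the Lagrange inversion machinery; the paper's proof is longer and more intricate but completely self-contained, using nothing beyond derivatives and explicit combinatorial bookkeeping.
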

To prove this proposition, it suffices to use the following lemma:
\begin{lemma} \label{lemma2}
The relation
\begin{equation}
{k(z+1)-1\choose k-1}=\frac{1}{k}\sum_{q=0}^{k-1}(k-q){k(z+1)\choose q}z^{k-q-1}
\label{10}
\end{equation}
is an identity.
\end{lemma}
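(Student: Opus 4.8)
The plan is to set $w:=k(z+1)$ and prove the equivalent statement
\[
\sum_{q=0}^{k-1}(k-q)\binom{w}{q}z^{k-q-1}=k\binom{w-1}{k-1},
\]
after which the lemma follows at once by dividing by $k$. The whole argument will rest on a single observation: since $w=k(z+1)=kz+k$ we have $w-k=kz$, and therefore the coefficient $k-q$ can be rewritten as $k-q=(w-q)-kz$. Splitting the sum according to this decomposition is what will make the expression telescope, so the first thing I would do is record this rewriting.

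Next I would use the elementary identity $(w-q)\binom{w}{q}=w\binom{w-1}{q}$ to deal with the first piece. Introducing the two partial sums
\[
A_k:=\sum_{q=0}^{k-1}\binom{w-1}{q}z^{k-1-q},\qquad B_k:=\sum_{q=0}^{k-1}\binom{w}{q}z^{k-1-q},
\]
the left-hand side becomes $wA_k-kz\,B_k$. The reason for isolating $A_k$ and $B_k$ is that they are linked by a single application of Pascal's rule: inserting $\binom{w}{q}=\binom{w-1}{q}+\binom{w-1}{q-1}$ into $B_k$ and shifting the index in the second sum gives $B_k=A_k+A_{k-1}$.

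Finally I would combine these relations. Using $B_k=A_k+A_{k-1}$ together with $w-kz=k$,
\[
wA_k-kz\,B_k=(w-kz)A_k-kz\,A_{k-1}=k\bigl(A_k-zA_{k-1}\bigr),
\]
and the bracket telescopes: each term of $A_k$ with index $q\le k-2$ is cancelled by the corresponding term of $zA_{k-1}$, leaving only the top term, so that $A_k-zA_{k-1}=\binom{w-1}{k-1}$. This produces exactly $k\binom{w-1}{k-1}$, as required. I expect the only nonroutine step to be spotting the decomposition $k-q=(w-q)-kz$ (equivalently, exploiting $w-k=kz$): it is precisely this rewriting, rather than any direct summation, that converts the sum into a telescoping difference, after which $(w-q)\binom{w}{q}=w\binom{w-1}{q}$, Pascal's rule, and the telescoping are all entirely standard.
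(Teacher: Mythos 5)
Your proof is correct, and it takes a genuinely different --- and substantially shorter --- route than the paper's. Every step checks out: with $w=k(z+1)$ the rewriting $k-q=(w-q)-kz$ is legitimate because $w-kz=k$; the absorption identity $(w-q)\binom{w}{q}=w\binom{w-1}{q}$ and Pascal's rule hold as polynomial identities in $w$, so they apply after the substitution $w=k(z+1)$; the index shift does give $B_k=A_k+A_{k-1}$ (the $q=0$ boundary term vanishes under the convention $\binom{w-1}{-1}=0$); and since $zA_{k-1}=\sum_{q=0}^{k-2}\binom{w-1}{q}z^{k-1-q}$ cancels all but the top term of $A_k$, one indeed gets $A_k-zA_{k-1}=\binom{w-1}{k-1}$, whence the left-hand side equals $k\binom{w-1}{k-1}$ as required. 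The paper instead notes that both sides of (\ref{10}) are polynomials of degree $k-1$ in $z$ and equates all derivatives at $z=0$; this reduces the lemma to the family of identities (\ref{i0}) among the products $\xi_{i_1,\ldots,i_r}$, which the author proves by repeatedly partitioning the index sets $D_0, D_1,\ldots$ and descending step by step until a trivial identity remains. Your argument replaces that elaborate coefficient-matching with three standard binomial manipulations, all keyed to the single observation that $w-k=kz$ makes the sum telescope; it is shorter, easier to verify, and avoids the bookkeeping with the sets $D_q$ entirely. What the paper's heavier method buys is explicit information about the Taylor coefficients of both sides at $z=0$ --- the combinatorial identities (\ref{i0}) have independent content --- but as a proof of the lemma itself your approach is cleaner.
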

\begin{proof} 
Let us denote
\[
g_k(z):=k{k(z+1)-1\choose k-1}\;\;\mbox{and}\;\;\tilde{g}_k(z):=\sum_{q=0}^{k-1}(k-q){k(z+1)\choose q}z^{k-q-1}.
\]
Since $g_k(z)$ and $\tilde{g}_k(z)$ are polynomials in $z$ of order $k-1$, it is obvious that to prove the identity (\ref{10}, it suffices to show that
\[
g_k^{(r)}(0)=\tilde{g}_k^{(r)}(0),\;\; \forall r=0,\ldots, k-1.
\] 
For example, it is easy to see that $g_k(0)=\tilde{g}_k(0)=k$. Let $\xi:=k!$, while $\xi_{i_1,\ldots, i_m}$ denote the product $\prod_{i=1}^ki $, where there are no numbers $i_1<i_2<\ldots <i_m$ . With this notation, we can write
\begin{equation}
g_k^{(r)}(0)=\frac{r!k^r}{(k-1)!}\sum_{1\leq i_1<\cdots< i_r\leq k-1}\xi_{i_1, \ldots, i_r}.
\label{24}
\end{equation}
In turn, for $\tilde{g}_k(z)$, we get the following:
\begin{equation}
\tilde{g}_k^{(r)}(0)=r!\sum_{q=0}^{r}\frac{k^{r-q}}{q!(k-q-1)!}\sum_{q+1\leq i_1<\cdots< i_{r-q}\leq k-1}\xi_{i_1+1, \ldots, i_{r-q}+1}.
\label{25}
\end{equation}
Equating (\ref{24}) to (\ref{25}) and multiplying the resulting relation by $(k-1)!/r!$, we get the equality 
\begin{equation}
k^r\sum_{1\leq i_1<\cdots< i_r\leq k-1}\xi_{i_1, \ldots, i_r}=\sum_{q=0}^{r}{k-1\choose q}k^{r-q}\sum_{q+1\leq i_1<\cdots< i_{r-q}\leq k-1}\xi_{i_1+1, \ldots, i_{r-q}+1},
\label{i0}
\end{equation}
that remains for us to prove.

For convenience, let us denote
\[
D_{q}:=\{i_j : q+1\leq i_1<\cdots< i_{r-q}\leq k-1\}.
\] 
Obviously, the partition
\[
D_{0}=D^{(1)}_{0}\bigsqcup D^{(2)}_{0}
\]
with the parts
\[
D^{(1)}_{0}:=\left\{i_j : i_1=1;\; 2\leq i_2<\cdots< i_r\leq k-1\right\}
\]
and
\[
D^{(2)}_{0}:=\left\{i_j :  2\leq i_1<\cdots< i_r\leq k-1\right\}
\]
gives the relation
\begin{eqnarray}
\sum_{\{i_j\}\in D_{0}}\xi_{i_1, \ldots, i_r}&=&\sum_{2\leq i_2<\cdots< i_r\leq k-1}\xi_{1, i_2, \ldots, i_{r}}+\sum_{2\leq i_1<\cdots< i_r\leq k-1}\xi_{i_1, \ldots, i_{r}}\nonumber\\
&=&\sum_{\{i_j\}\in D_{1}}\xi_{i_1, \ldots, i_{r-1}}+\sum_{2\leq i_1<\cdots< i_r\leq k-1}\xi_{i_1, \ldots, i_{r}}.   \label{i1}
\end{eqnarray}
On the other hand, making  use  a partition
\[
D_{0}=\tilde{D}^{(1)}_{0}\bigsqcup \tilde{D}^{(2)}_{0}
\]
with
\[
\tilde{D}^{(1)}_{0}:=\left\{i_j : i_r=k-1;\; 1\leq i_1<\cdots< i_{r-1}\leq k-2\right\}
\]
and
\[
\tilde{D}^{(2)}_{0}:=\left\{i_j :  1\leq i_1<\cdots< i_r\leq k-2\right\},
\]
gives the relation
\begin{eqnarray}
\sum_{\{i_j\}\in D_{0}}\xi_{i_1+1, \ldots, i_r+1}&=&\sum_{1\leq i_1<\cdots< i_{r-1}\leq k-2}\xi_{i_1+1, \ldots, i_{r-1}+1, k}+\sum_{1\leq i_1<\cdots< i_r\leq k-2}\xi_{i_1+1, \ldots, i_{r}+1}\nonumber\\
&=&\frac{1}{k}\sum_{\{i_j\}\in D_{1}}\xi_{i_1, \ldots, i_{r-1}}+\sum_{2\leq i_1<\cdots< i_r\leq k-1}\xi_{i_1, \ldots, i_{r}}.
\label{i2}
\end{eqnarray}
Using (\ref{i1}) and (\ref{i2}), we can rewrite (\ref{i0}) as
\[
(k-1)k^{r-1}\sum_{\{i_j\}\in D_{1}}\xi_{i_1, \ldots, i_{r-1}}=\sum_{q=1}^{r}{k-1\choose q}k^{r-q}\sum_{\{i_j\}\in D_{q}}\xi_{i_1+1, \ldots, i_{r-q}+1}.
\]
Further, using similar partitionings of the sets $D_1, D_2,\ldots$, we continue step-by-step the transformations of the initial relation. At the $m$-th step we obtain this relation in the form
\[
{k-1\choose m}k^{r-m}\sum_{\{i_j\}\in D_{m}}\xi_{i_1, \ldots, i_{r-m}}=\sum_{q=m}^{r}{k-1\choose q}k^{r-q}\sum_{\{i_j\}\in D_{q}}\xi_{i_1+1, \ldots, i_{r-q}+1}.
\]
It is obvious that on the $r$-th step we obtain a trivial identity. Thus, the lemma is proved.
\end{proof} 
Taking into account that $S^{(k)}_1(z)=k(z+1)z^k/2$, from the lemma \ref{lemma2} we obtain the proposition \ref{pr:3.2}. 
This proposition  is consistent with the assumption made in the paper \cite{Svinin}, according to which the polynomials $\hat{\mathcal{S}}_{2r+1}^{(k)}(z)$, for $r\geq 0$ are determined by\footnote{Here this formula is written somewhat differently than in the work \cite{Svinin}.}
\begin{equation}
\hat{\mathcal{S}}_{2r+1}^{(k)}(z)=\left(-\frac{w}{2}\right)^{r+1}F_r(w, -k)k^2{k(z+1)-1\choose k-1},
\label{36}
\end{equation}
where $F_r(w, k)$ are certain polynomials in $k$ of degree $r-1$ with coefficients that are Laurent polynomials in the variable $w:=z(z+1) $. In particular, for $z=1$, we have
\[
\hat{\mathcal{S}}_{2r+1}^{(k)}(1)=(-1)^{r+1}F_r(2, -k)k^2{2k-1\choose k-1}=(-1)^{r+1}F_r(2, -k)\frac{k^2}{2}{2k\choose k}.
\]
The last expression agrees with (\ref{251}). 

The first six polynomials $F_r(w, k)$ in $k $ are defined as follows: \cite{Svinin}:
\[
F_1(w, k)=1,\;\;
F_2(w, k)=2k+\frac{2}{3}\frac{w+1}{w},\;\;
\]
\[
F_3(w, k)=6k^2+\frac{16}{3}\frac{w+1}{w}k+\frac{4}{3}\frac{(w+1)^2}{w^2},\;\; 
\]
\[
F_4(w, k)= 24k^3+40\frac{(w+1)}{w}k^2+24\frac{(w+1)^2}{w^2}k+\frac{24}{5}\frac{(w+1)^3}{w^3}+\frac{8}{5}\frac{1}{w},
\]
\begin{eqnarray}
F_5(w, k)&=& 120k^4+320\frac{(w+1)}{w}k^3+\frac{1016}{3}\frac{(w+1)^2}{w^2}k^2\nonumber\\
    &&+\left(160\frac{(w+1)^3}{w^3}+32\frac{1}{w}\right)k+\frac{80}{3}\left(\frac{(w+1)^4}{w^4}+\frac{w+1}{w^2}\right)\nonumber
\end{eqnarray}
\begin{eqnarray}
F_6(w, k)&=& 720k^5+2800\frac{(w+1)}{w}k^4+\frac{13664}{3}\frac{(w+1)^2}{w^2}k^3\nonumber\\
&&+\left(\frac{55936}{15}\frac{(w+1)^3}{w^3}+\frac{2544}{5}\frac{1}{w}\right)k^2+\left(\frac{22112}{15}\frac{(w+1)^4}{w^4}+\frac{13664}{15}\frac{(w+1)}{w^2}\right)k\nonumber\\
&&+\frac{22112}{105}\frac{(w+1)^5}{w^5}+\frac{44224}{105}\frac{(w+1)^2}{w^3}.\nonumber
\end{eqnarray}
A direct check shows that for these examples, $F_r(2, k)=F_r(k)$ is true, where the polynomials $F_r(k)$ are the Gandhi polynomials. The examples written out above suggest that polynomials
$F_r(w, k)$ should be written in the form
\[
F_r(w, k)=\sum_{q\geq 0}\frac{1}{w^q}\sum_{j=0}^{r-3q-1}F^{(q)}_{r,j}\frac{(w+1)^{r-j-3q-1}}{w^{r-j-3q-1}}k^j
\]
with some coefficients $ F^{(q)}_{r, j}$. For any fixed $r$, the sum over $q$ is in fact finite, since, by convention, if $r-3q-1\leq -1 $, then the sum over $j$ is assumed to be zero.

\end{document}